\providecommand{\U}[1]{\protect\rule{.1in}{.1in}}
\providecommand{\U}[1]{\protect \rule{.1in}{.1in}}
\providecommand{\U}[1]{\protect \rule{.1in}{.1in}}
\newtheorem{theorem}{Theorem}
\theoremstyle{plain}
\newtheorem{corollary}{Corollary}
\newtheorem{definition}{Definition}
\newtheorem{example}{Example}
\newtheorem{lemma}{Lemma}
\newtheorem{proposition}{Proposition}
\numberwithin{equation}{section}
\begin{document}

\begin{center}
\bigskip

\noindent{\Large \textbf{Approximately Groups in Proximal Relator Spaces: An Algebraic View of Digital Images}%
}\\[15pt]{\large Ebubekir \.{I}nan}
\\[5mm]

\begin{center}
Department of Mathematics, Faculty of Arts and Sciences, Ad\i yaman University, Ad\i yaman, Turkey.\\
Computational Intelligence Laboratory, Department of Electrical \& Computer Engineering, University of Manitoba, Winnipeg, Manitoba, Canada.
\end{center}

\bigskip

\begin{center}
\textit{einan@adiyaman.edu.tr}
\end{center}

\bigskip

\textbf{Abstract} \\[1mm]
\end{center}

The focus of this article is to define the descriptively approximations in
proximal relator spaces. Afterwards, descriptive approximately
algebraic structures such as groupoids, semigroups and groups in digital images endowed with descriptive Efremovi\u{c} proximity relations were introduced. \\[3mm]

\textit{Key Words:}{ Approximately groups, Efremovi\u{c} proximity, proximity spaces, relator space, descriptive approximations.}

\textit{2010 Mathematics Subject Classification:} 08A05, 68Q32, 54E05.

\section{Introduction}

In the concept of ordinary algebraic structures, such a structure that
consists of a nonempty set of abstract points with one or more binary
operations, which are required to satisfy certain axioms. For example, a
groupoid is an algebraic structure $\left(  A,\circ\right)  $ consisting of a
nonempty set $A$ and a binary operation \textquotedblleft$\circ$%
\textquotedblright\ defined on $A$ \cite{Clifford1961}. In a groupoid, the
binary operation \textquotedblleft$\circ$\textquotedblright\ must be only
closed in $A$, i.e., for all $a,b$ in $A$, the result of the operation $a\circ
b$ is also in $A$. As for the proximal relator space, the sets are
composed of non-abstract points instead of abstract points. And these points
describable with feature vectors in proximal relator spaces. Descriptively upper approximation of a nonempty set is obtained by using the set of points composed by the proximal relator space together with
matching features of points. In the algebraic structures constructed on proximal relator spaces, the basic tool is consideration of descriptively upper approximations
of the subsets of non-abstract points. In a groupoid $A$ in proximal relator space, the binary operation \textquotedblleft$\circ
$\textquotedblright\ may be closed in descriptively upper approximation of $A$, i.e., for
all $a,b$ in $A$, $a\circ b$ is in descriptively upper approximation of $A$. In 2012 and 2014, \.{I}nan et al investigated similar view of this subject \cite{inan2012,inan2014,ozturk2014}.

There are two important differences between ordinary algebraic structures and
descriptive approximately algebraic structures. The first one is working with non-abstract
points such as digital images while the second one is considering of descriptively upper approximations of the
subsets of non-abstract points for the closeness of binary operations.

Essentially, the aim of this paper is to obtain algebraic structures such as descriptive approximately groupoids using sets and operations that ordinary are not being algebraic structures. Moreover two examples with working on digital images endowed with descriptive proximity relation were given.

\section{Preliminaries}

A \emph{relator} is a nonvoid family of relations $\mathcal{R}$ on a nonempty
set $X$. The pair $(X,\mathcal{R})$ (also denoted $X(\mathcal{R})$) is called
a relator space. Relator spaces are natural generalisations of ordered sets
and uniform spaces~\cite{Szaz1987}. With the introduction of a family of
proximity relations on $X$, a proximal relator space $(X,\mathcal{R}%
_{\delta})$ ($X(\mathcal{R}_{\delta})$) was obtained.  As in~\cite{Peters2015relator}, $(\mathcal{R}_{\delta})$ contains proximity relations, namely, Efremovi\u{c} proximity $\delta$~\cite{efremovic1951,Efremovic1952}, Lodato proximity, Wallman proximity, descriptive proximity $\delta_{\Phi}$ in defining $\mathcal{R}_{\delta_{\Phi}}$~\cite{Peters2012ams,Peters2013mcsintro,Peters2014adyu}. 

Proximity space axiomatized by V. A. Efremovi\u{c} in 1934 under the name of infinitesimal space and published in 1951. An Efremovi\u{c} proximity $\delta$ is a relation on $2^X$ that satisfies
\begin{itemize}
\item [1$^o$] $A\ \delta\ B \Rightarrow B\ \delta\ A$.
\item [2$^o$] $A\ \delta\ B \Rightarrow  A \neq \emptyset$ and $B \neq \emptyset$.
\item [3$^o$] $A \cap B \neq \emptyset \Rightarrow A\ \delta\ B$.
\item [4$^o$] $A\ \delta\ (B \cup C)$ if and only if $A\ \delta\ B$ or $A\ \delta\ C$.
\item [5$^o$] $\{x\}\ \delta\ \{y\}$ if and only if $x = y$.
\item [6$^o$] \label{axiom:EF}{EF axiom}. $A\ \underline{\delta}\ B \Rightarrow \exists E\subseteq X$ such that $A\ \underline{\delta}\ E$ and $E^c\ \underline{\delta}\ B$.
\end{itemize}
Lodato proximity~\cite{Lodato1962,Lodato1964,Lodato1966} swaps the EF axiom~\ref{axiom:EF} for the following condition:
\[
A\ \delta\ B \  \textrm{and} \ \forall b \in B, \{b\}\ \delta\ C \Rightarrow A\ \delta\ C.\ (Lodato Axiom)
\]
In this article, only two
proximity relations, namely, the Efremovi\u{c} proximity $\delta
$~\cite{Efremovic1952} and the descriptive proximity $\delta_{\Phi}$ in
defining a descriptive proximal relator space (denoted by $(X,\mathcal{R}%
_{\delta_{\Phi}})$) were used.

In a discrete space, a non-abstract point has a location and features that can
be measured~\cite[\S 3]{Kovar2011}. Let $X$ be a nonempty set of non-abstract
points in a proximal relator space $(X,\mathcal{R}_{\delta_{\Phi}})$ and let
$\Phi=\left\{  \phi_{1},\dots,\phi_{n}\right\}  $ a set of probe functions
that represent features of each $x\in X$.
A \emph{probe function} $\Phi:X\rightarrow\mathbb{R}$ represents a feature of
a sample point in a picture. Let $\Phi(x)=(\phi_{1}(x),\dots,\phi_{n}(x))$
denote a feature vector for $x$, which provides a description of each $x\in
X$. To obtain a descriptive proximity relation (denoted by $\delta_{\Phi}$),
one first chooses a set of probe functions. Let $A,B\in2^{X}$ and
$\mathcal{Q}(A),\mathcal{Q}(B)$ denote sets of descriptions of points in
$A,B$, respectively. For example, $\mathcal{Q}(A)=\left\{  \Phi(a) \mid a\in
A\right\}  $. The expression $A\ \delta_{\Phi}\ B$ reads $A\ $is descriptively
near$\ B$. Similarly, $A\ \underline{\delta}_{\Phi}\ B$ reads $A\ $is
descriptively far from$\ B$. The descriptive proximity of $A$ and $B$ is
defined by
\[
A\ \delta_{\Phi}\ B\Leftrightarrow\mathcal{Q}(A)\cap\mathcal{Q}(B)\neq
\emptyset\text{.}%
\]

The relation $\delta_{\Phi}$ is called a \emph{descriptive proximity
relation}. Similarly, $A\ \underline{\delta}_{\Phi}\ B$ denotes that $A$ is
descriptively far (remote) from $B$.

$Object$ $Description:$ $\Phi\left(  x\right)  =\left(  \varphi_{1}\left(
x\right)  ,\varphi_{2}\left(  x\right)  ,\varphi_{3}\left(  x\right)
,...,\varphi_{i}\left(  x\right)  ,...,\varphi_{L}\left(  x\right)  \right)  $.

The intuition underlying a description $\Phi\left(  x\right)  $ is a recording
of measurements from sensors, where each sensor is modelled by a function
$\varphi_{i}$.%

\begin{tabular}
[c]{l}
\end{tabular}

\begin{definition}
\textbf{(}Set Description\textbf{, }\cite{Naimpally2012}\textbf{)} Let
$X$ be a nonempty set of non-abstract points, $\Phi$ an object description and
$A$ a subset of $X$. Then the \textit{set description} of $A$ is
defined as%
\[
\mathcal{Q}(A)=\{ \Phi(a)\mid a\in A\} \text{.}%
\]

\end{definition}

\begin{definition}
\textbf{(}Descriptive Set Intersection\textbf{, }\cite{Naimpally2012,
Peters2012ams}\textbf{)} Let $X$ be a nonempty set of non-abstract points, $A$ and
$B$ any two subsets of $X$. Then the descriptive (set) intersection
of $A$ and $B$ is defined as%
\[
A\underset{\Phi}{\cap}B=\left\{  x\in A\cup B\mid\Phi\left(  x\right)
\in\mathcal{Q}\left(  A\right)  \text{ }and\text{ }\Phi\left(  x\right)
\in\mathcal{Q}\left(  B\right)  \right\}  \text{.}%
\]

\end{definition}

\begin{definition}
\cite{ PetersMCSintro} Let $X$ be a nonempty set of non-abstract points, $A$ and $B$
any two subsets of $X$. If $\mathcal{Q}(A)\cap\mathcal{Q}%
(B)\neq\emptyset$, then $A$ is called descriptively near $B$ and denoted by
$A\delta_{\Phi}B$.
\end{definition}

\begin{definition}
\textbf{(}Descriptive Nearness Collections\textbf{,} \cite{ PetersMCSintro}\textbf{)
}Let $X$ be a nonempty set of non-abstract points and $A$ any subset of
$X$. Then the descriptive nearness collection $\xi_{\Phi}\left(
A\right)  $ is defined by%
\[
\xi_{\Phi}\left(  A\right)  =\left\{  B\in\mathcal{P}\left(  X%
\right)  \mid A\delta_{\Phi}B\right\}  \text{.}%
\]

\end{definition}

\begin{theorem}
\label{Th0}\cite{ PetersMCSintro} Let $\Phi$ be an object description, $A$ any subset
of $X$ and $\xi_{\Phi}\left(  A\right)  $ a descriptive nearness
collections. Then $A\in\xi_{\Phi}\left(  A\right)  $.
\end{theorem}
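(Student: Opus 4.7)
The plan is to unwind the definitions and reduce the statement to the observation that $\mathcal{Q}(A)\cap\mathcal{Q}(A)=\mathcal{Q}(A)$, which is nonempty whenever $A$ is. Concretely, by the definition of the descriptive nearness collection, showing $A\in\xi_{\Phi}(A)$ is equivalent to establishing $A\,\delta_{\Phi}\,A$, and by the definition of the descriptive proximity relation this in turn is equivalent to $\mathcal{Q}(A)\cap\mathcal{Q}(A)\neq\emptyset$.

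First I would recall the set description $\mathcal{Q}(A)=\{\Phi(a)\mid a\in A\}$ and note that for any $a\in A$ the feature vector $\Phi(a)$ sits in $\mathcal{Q}(A)$. Hence $\mathcal{Q}(A)$ is nonempty (since the setup implicitly takes $A$ to be a nonempty subset of non-abstract points; I would remark on this convention in passing, because if $A=\emptyset$ then $\mathcal{Q}(A)=\emptyset$ and axiom $2^o$ of Efremovi\v{c} proximity fails). Then I would observe the tautology $\mathcal{Q}(A)\cap\mathcal{Q}(A)=\mathcal{Q}(A)\neq\emptyset$.

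From this I would conclude $A\,\delta_{\Phi}\,A$ by the definition of $\delta_{\Phi}$, and finally apply the definition of $\xi_{\Phi}(A)=\{B\in\mathcal{P}(X)\mid A\,\delta_{\Phi}\,B\}$ with $B:=A$ to obtain $A\in\xi_{\Phi}(A)$.

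There is essentially no obstacle here; the proof is a three-line chase through definitions. The only subtle point is the tacit nonemptiness hypothesis on $A$, so I would state that assumption explicitly at the start of the proof to make the argument fully rigorous.
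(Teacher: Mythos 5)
Your proof is correct. The paper itself gives no proof of this theorem (it is quoted from the cited source without argument), and your three-step definition chase --- $A\in\xi_{\Phi}(A)$ iff $A\ \delta_{\Phi}\ A$ iff $\mathcal{Q}(A)\cap\mathcal{Q}(A)=\mathcal{Q}(A)\neq\emptyset$ --- is exactly the intended argument; your explicit remark that $A$ must be nonempty (so that $\mathcal{Q}(A)\neq\emptyset$, consistent with axiom $2^{o}$) is a genuine point of care that the paper's statement ``$A$ any subset of $X$'' glosses over.
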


\section{\bigskip Descriptively Approximations}

Let $\left(  X,\mathcal{R}_{\delta_{\Phi}}\right)  $ be descriptive proximal
relator space and $A\subset X$, where $A$ contains non-abstract objects. A descriptive closure of a point $a\in A$ is
defined by
\[
cl_{\Phi}\left(  a\right)  =\left\{  x\in X \mid \Phi\left(  a\right)  =\Phi\left(
x\right)  \right\}  \text{.}%
\]

Descriptively lower approximation of the set $A$\ begins by determining which
descriptive closure $cl_{\Phi}\left(  a\right)  $ are subsets of \textit{set
description} $\mathcal{Q}\left(  A\right)  $. This discovery process leads to
the construction of what is known as the descriptively lower approximation of
$A\subseteq X$, which is denoted by $\Phi_{\ast}A$.

\begin{definition}
(Descriptively Lower Approximation of a Set) Let $\left(  X,\mathcal{R}%
_{\delta_{\Phi}}\right)  $ be descriptive proximal relator space and $A\subset
X$. A descriptively lower
approximation of $A$ is defined as%
\[
\Phi_{\ast}A=\{a\in A \mid cl_{\Phi}\left(  a\right)  \subseteq\mathcal{Q}\left(
A\right)  \}\text{.}%
\]

\end{definition}

\begin{definition}
\label{Df4.1}(Descriptively Upper Approximation of a Set) Let $\left(
X,\mathcal{R}_{\delta_{\Phi}}\right)  $ be descriptive proximal relator space
and $A\subset X$. A descriptively
upper approximation of $A$ is defined as
\[
\Phi^{\ast}A=\{x\in X \mid x \delta_{\Phi} A\}\text{.}
\]
\end{definition}

\begin{definition}
(Descriptively Boundary Region) Let $Bnd_{\Phi}A$ denote the descriptively
boundary region of a set $A\subseteq X$ defined by
\[
\Phi_{Bnd}A=\Phi^{\ast}A\setminus\Phi_{\ast}A=\left\{  x\mid x\in\Phi^{\ast
}A\text{ \textit{and} }x\notin\Phi_{\ast}A\right\}  \text{.}%
\]
\end{definition}

\begin{lemma}
\label{Lem0}Let $\left(  X,\mathcal{R}_{\delta_{\Phi}}\right)  $ be
descriptive proximal relator space and $A,B\subset X$, then

(i) $\mathcal{Q}\left(  A\cap B\right)  =\mathcal{Q}\left(  A\right)
\cap\mathcal{Q}\left(  B\right)  $,

(ii) $\mathcal{Q}\left(  A\cup B\right)  =\mathcal{Q}\left(  A\right)
\cup\mathcal{Q}\left(  B\right)  $.
\end{lemma}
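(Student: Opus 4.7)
The plan is to prove each equality by double inclusion, working directly from the definition $\mathcal{Q}(S)=\{\Phi(x)\mid x\in S\}$ and the elementary characterizations of union and intersection of sets. No deeper machinery from the proximal relator structure appears to be needed; only the fact that membership in $A\cap B$ (resp.\ $A\cup B$) is characterized by conjunction (resp.\ disjunction) of membership in $A$ and in $B$.

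For part (ii), both inclusions should be essentially immediate. Given $y\in\mathcal{Q}(A\cup B)$, there exists $x\in A\cup B$ with $y=\Phi(x)$; splitting into the two cases $x\in A$ and $x\in B$ produces $y\in\mathcal{Q}(A)$ or $y\in\mathcal{Q}(B)$ respectively, hence $y\in\mathcal{Q}(A)\cup\mathcal{Q}(B)$. Conversely, any $y\in\mathcal{Q}(A)\cup\mathcal{Q}(B)$ has a witness $x$ lying in $A$ or in $B$, and in either case $x\in A\cup B$, giving $y=\Phi(x)\in\mathcal{Q}(A\cup B)$.

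For part (i), the forward inclusion is a one-line check: if $y=\Phi(x)$ for some $x\in A\cap B$, then $x$ lies in each of $A$ and $B$ and so $y$ lies in each of $\mathcal{Q}(A)$ and $\mathcal{Q}(B)$. The reverse inclusion is where I expect the real difficulty: an element $y\in\mathcal{Q}(A)\cap\mathcal{Q}(B)$ only provides witnesses $a\in A$ and $b\in B$ with $\Phi(a)=y=\Phi(b)$, and nothing a priori forces $a=b$ or forces the existence of a third witness lying in $A\cap B$. My plan here is to invoke the descriptive closure $cl_{\Phi}$ introduced before Definition of the lower approximation: $\Phi(a)=\Phi(b)$ implies $a,b\in cl_{\Phi}(a)=cl_{\Phi}(b)$, so $a$ and $b$ are descriptively indistinguishable, and by the convention of the paper (where points are treated only up to their feature vectors) such an $a$ can be taken as a witness for $A\cap B$ for the purposes of $\mathcal{Q}$.

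The main obstacle is precisely this reverse inclusion in (i): read strictly set-theoretically, $\mathcal{Q}(A\cap B)\supseteq\mathcal{Q}(A)\cap\mathcal{Q}(B)$ need not hold in general, and the equality is only literally true when descriptions separate points or when one adopts the descriptive viewpoint in which points sharing a feature vector are identified. I would therefore either (a) state and use an implicit assumption that $\Phi$ is injective on the relevant subsets, or (b) invoke the descriptive closure to replace $A\cap B$ by its descriptive counterpart, in which case the argument collapses to the analogue of part (ii) transported through $\Phi$.
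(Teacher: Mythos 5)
The paper states Lemma~\ref{Lem0} without any proof, so there is no argument of the author's to compare yours against; your proposal has to stand on its own, and for the most part it does. Part (ii) and the inclusion $\mathcal{Q}(A\cap B)\subseteq\mathcal{Q}(A)\cap\mathcal{Q}(B)$ are exactly as routine as you say, and your proofs of them are complete. More importantly, your diagnosis of the reverse inclusion in (i) is correct: as literally stated, part (i) is \emph{false} in general. Take $X=\{a,b\}$ with $\Phi(a)=\Phi(b)=v$, $A=\{a\}$, $B=\{b\}$; then $A\cap B=\emptyset$, so $\mathcal{Q}(A\cap B)=\emptyset$, while $\mathcal{Q}(A)\cap\mathcal{Q}(B)=\{v\}$. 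No amount of set manipulation can close this gap, so a ``proof'' of (i) necessarily smuggles in an extra hypothesis. Your two candidate repairs are the right ones, and of them, option (b) is the one the paper itself has already equipped you for: the Definition of descriptive set intersection, $A\underset{\Phi}{\cap}B=\{x\in A\cup B\mid \Phi(x)\in\mathcal{Q}(A)\ \text{and}\ \Phi(x)\in\mathcal{Q}(B)\}$, satisfies $\mathcal{Q}\bigl(A\underset{\Phi}{\cap}B\bigr)=\mathcal{Q}(A)\cap\mathcal{Q}(B)$ unconditionally, since any witness $a\in A$ of a common description automatically lies in $A\cup B$ and meets both membership conditions. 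Replacing $\cap$ by $\underset{\Phi}{\cap}$ in (i) is almost certainly the intended statement.

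One caution about your option (a) and the closing suggestion of ``identifying points with the same feature vector'': that identification cannot be made silently, because the lemma is later used in the proof of part (3) of the approximation theorem, where the backward implication $cl_{\Phi}(x)\subseteq\mathcal{Q}(A)\cap\mathcal{Q}(B)\Rightarrow cl_{\Phi}(x)\subseteq\mathcal{Q}(A\cap B)$ relies on precisely the false direction of (i). So the defect propagates, and whichever repair you adopt for the lemma has to be carried through there as well. Your instinct to flag the problem rather than wave it away is the correct mathematical response here.
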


\begin{theorem}
Let $\left(  X,\mathcal{R}_{\delta_{\Phi}}\right)  $ be descriptive proximal
relator space and $A,B\subset X$,  then the following statements hold.

(1) $\left(  \Phi_{\ast}A\right)  \subseteq A\subseteq\left(  \Phi^{\ast
}A\right)  $,

(2) $\Phi^{\ast}\left(  A\cup B\right)  =\left(  \Phi^{\ast}A\right)
\cup\left(  \Phi^{\ast}B\right)  $,

(3) $\Phi_{\ast}\left(  A\cap B\right)  =\left(  \Phi_{\ast}A\right)
\cap\left(  \Phi_{\ast}B\right)  $,

(4) If $A\subseteq B$, then $\left(  \Phi_{\ast}A\right)  \subseteq\left(
\Phi_{\ast}B\right)  $,

(5) If $A\subseteq B$, then $\left(  \Phi^{\ast}A\right)  \subseteq\left(
\Phi^{\ast}B\right)  $,

(6) $\Phi_{\ast}\left(  A\cup B\right)  \supseteq\left(  \Phi_{\ast}A\right)
\cup\left(  \Phi_{\ast}B\right)  $,

(7) $\Phi^{\ast}\left(  A\cap B\right)  \subseteq\left(  \Phi^{\ast}A\right)
\cap\left(  \Phi^{\ast}B\right)  $.
\end{theorem}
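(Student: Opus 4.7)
The plan is to reduce every item to description-level characterizations of the two operators, after which everything follows from Lemma~\ref{Lem0} and the obvious monotonicity of $A \mapsto \mathcal{Q}(A)$. Unwinding Definition~\ref{Df4.1}: $x \in \Phi^{\ast}A$ means $\{x\}\delta_{\Phi}A$, i.e.\ $\mathcal{Q}(\{x\}) \cap \mathcal{Q}(A) \neq \emptyset$, which is simply $\Phi(x) \in \mathcal{Q}(A)$. Similarly, the condition $cl_{\Phi}(a) \subseteq \mathcal{Q}(A)$ defining $\Phi_{\ast}A$ I would read (see the final paragraph below) as $\Phi(y) \in \mathcal{Q}(A)$ for every $y$ with $\Phi(y) = \Phi(a)$. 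With these readings, both $\Phi^{\ast}$ and $\Phi_{\ast}$ live purely at the level of feature descriptions.

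For part (1), $\Phi_{\ast}A \subseteq A$ is built into the definition; and $a \in A$ forces $\Phi(a) \in \mathcal{Q}(A)$, hence $a \, \delta_{\Phi}\, A$ and $a \in \Phi^{\ast}A$. Parts (4) and (5) reduce to the fact that $A \subseteq B$ entails $\mathcal{Q}(A) \subseteq \mathcal{Q}(B)$, after which the inclusions are immediate from the characterizations above. Parts (6) and (7) are then formal corollaries: apply (4) to $A,B \subseteq A \cup B$ and take the union on the right to get (6); apply (5) to $A \cap B \subseteq A$ and $A \cap B \subseteq B$ and take the intersection on the right to get (7).

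For the two equalities (2) and (3) I would use Lemma~\ref{Lem0} directly. For (2): $x \in \Phi^{\ast}(A \cup B) \iff \Phi(x) \in \mathcal{Q}(A \cup B) = \mathcal{Q}(A) \cup \mathcal{Q}(B) \iff x \in \Phi^{\ast}A$ or $x \in \Phi^{\ast}B$. For (3): $a \in \Phi_{\ast}(A \cap B)$ iff $a \in A \cap B$ and $\Phi(y) \in \mathcal{Q}(A \cap B) = \mathcal{Q}(A) \cap \mathcal{Q}(B)$ for every $y \in cl_{\Phi}(a)$, and the last condition splits conjunctively as $a \in \Phi_{\ast}A$ and $a \in \Phi_{\ast}B$. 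The reason (6) and (7) are only inclusions — not equalities — is that $\mathcal{Q}$ fails the opposite containment for intersections at the level of (6) and unions at the level of (7) only in the sense that new descriptions cannot appear from enlarging a set beyond what monotonicity already gives.

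The only mildly delicate point, and the most likely source of confusion rather than genuine obstruction, is the slight type mismatch in the definition of $\Phi_{\ast}A$: the set $cl_{\Phi}(a)$ sits inside $X$ while $\mathcal{Q}(A)$ sits inside $\mathbb{R}^{n}$. I would dispose of this once at the outset by reading $cl_{\Phi}(a) \subseteq \mathcal{Q}(A)$ as $\Phi\bigl(cl_{\Phi}(a)\bigr) \subseteq \mathcal{Q}(A)$; since every element of $cl_{\Phi}(a)$ shares the single description $\Phi(a)$, this collapses to $\Phi(a) \in \mathcal{Q}(A)$, and every set-theoretic manipulation in the paragraphs above goes through without incident.
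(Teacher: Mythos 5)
Your proposal is correct and follows essentially the same route as the paper: reduce both operators to description-level conditions, prove (1) from the definitions, prove (2) and (3) by a chain of equivalences through Lemma~\ref{Lem0}, and obtain (6) and (7) as formal corollaries of the monotonicity statements (4) and (5). The only minor divergences are that you prove (4) and (5) directly from $\mathcal{Q}(A)\subseteq\mathcal{Q}(B)$ whereas the paper derives them from (3) and (2) via $A\cap B=A$ and $A\cup B=B$, and that you explicitly repair the type mismatch in the definition of $\Phi_{\ast}A$ (which the paper just manipulates formally) --- though be aware that your chosen repair, collapsing $cl_{\Phi}(a)\subseteq\mathcal{Q}(A)$ to $\Phi(a)\in\mathcal{Q}(A)$, forces $\Phi_{\ast}A=A$ for every $A$, so under that reading the lower-approximation clauses (1), (3), (4), (6) hold only degenerately.
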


\begin{proof}
(1) Let $a\in\left(  \Phi_{\ast}A\right)  $, then $cl_{\Phi}\left(  a\right)
\subseteq\mathcal{Q}\left(  A\right)  $, where $a\in A$. Hence $\left(
\Phi_{\ast}A\right)  \subseteq A$. Let $a\in A$ and it is obvious that
$\Phi\left(  a\right)  \in\mathcal{Q}\left(  A\right)  $, i.e. $\Phi\left(  a\right) \cap\mathcal{Q}\left(  A\right) \neq \emptyset $  and so $a\delta_{\Phi} A$. Therefore $a\in\left(  \Phi^{\ast}A\right)  $ and then $A\subseteq\left(  \Phi^{\ast
}A\right)  $.

(2)%
\begin{tabular}
[c]{l}
\end{tabular}

\begin{tabular}
[c]{ll}%
$x\in\Phi^{\ast}\left(  A\cup B\right)  $ & $ \Leftrightarrow x\delta_{\Phi} \left(A\cup B \right)   $\\
& $ \Leftrightarrow \Phi\left(  x\right) \cap\mathcal{Q}\left(  A\cup B\right) \neq \emptyset $\\
& $\Leftrightarrow\Phi\left(
x\right)  \in\mathcal{Q}\left(  A\cup B\right)  $, from Lemma \ref{Lem0}\\
& $\Leftrightarrow\Phi\left(  x\right)  \in\mathcal{Q}\left(  A\right)
\cup\mathcal{Q}\left(  B\right)  $\\
& $\Leftrightarrow\Phi\left(  x\right)  \in\mathcal{Q}\left(  A\right)  $ or
$\Phi\left(  x\right)  \in\mathcal{Q}\left(  B\right)  $\\
& $ \Leftrightarrow \Phi\left(  x\right) \cap\mathcal{Q}\left(  A\right) \neq \emptyset $ or $ \Phi\left(  x\right) \cap\mathcal{Q}\left(  B\right) \neq \emptyset $\\
& $ \Leftrightarrow x\delta_{\Phi} \left(A \right) $ or $ x\delta_{\Phi} \left(B \right) $ \\
& $\Leftrightarrow x\in\left(  \Phi^{\ast}A\right)  $ or $x\in\left(
\Phi^{\ast}B\right)  $\\
& $\Leftrightarrow x\in\left(  \Phi^{\ast}A\right)  \cup\left(  \Phi^{\ast
}B\right)  $.
\end{tabular}

Hence $\Phi^{\ast}\left(  A\cup B\right)  =\left(  \Phi^{\ast}A\right)
\cup\left(  \Phi^{\ast}B\right)  $.

(3)%
\begin{tabular}
[c]{l}
\end{tabular}

\begin{tabular}
[c]{ll}%
$x\in\Phi_{\ast}\left(  A\cap B\right)  $ & $\Leftrightarrow cl_{\Phi}\left(
x\right)  \subseteq\mathcal{Q}\left(  A\cap B\right)  $, $x\in A\cap B$, from
Lemma \ref{Lem0}\\
& $\Leftrightarrow cl_{\Phi}\left(  x\right)  \subseteq\mathcal{Q}\left(
A\right)  \cap\mathcal{Q}\left(  B\right)  $\\
& $\Leftrightarrow cl_{\Phi}\left(  x\right)  \subseteq\mathcal{Q}\left(
A\right)  $ and $cl_{\Phi}\left(  x\right)  \subseteq\mathcal{Q}\left(
B\right)  $\\
& $\Leftrightarrow x\in\left(  \Phi_{\ast}A\right)  $ and $x\in\left(
\Phi_{\ast}B\right)  $\\
& $\Leftrightarrow x\in\left(  \Phi_{\ast}A\right)  \cap\left(  \Phi_{\ast
}B\right)  $.
\end{tabular}

Thus $\Phi_{\ast}\left(  A\cap B\right)  =\left(  \Phi_{\ast}A\right)
\cap\left(  \Phi_{\ast}B\right)  $.

(4) Let $A\subseteq B$, then $A\cap B=A$. From statement (3) we have
$\Phi_{\ast}A=\Phi_{\ast}\left(  A\cap B\right)  =\left(  \Phi_{\ast}A\right)
\cap\left(  \Phi_{\ast}B\right)  $. Hence $\left(  \Phi_{\ast}A\right)
\subseteq\left(  \Phi_{\ast}B\right)  $.

(5) Let $A\subseteq B$, then $A\cup B=B$. From statement (2) we get
$\Phi^{\ast}B=\Phi^{\ast}\left(  A\cup B\right)  =\left(  \Phi^{\ast}A\right)
\cup\left(  \Phi^{\ast}B\right)  $. This implies that $\left(  \Phi^{\ast
}A\right)  \subseteq\left(  \Phi^{\ast}B\right)  $.

(6) Since $A\subseteq A\cup B$ and $B\subseteq A\cup B$, by (4) we have
$\left(  \Phi_{\ast}A\right)  \subseteq\Phi_{\ast}\left(  A\cup B\right)  $
and $\left(  \Phi_{\ast}B\right)  \subseteq\Phi_{\ast}\left(  A\cup B\right)
$.

Hence $\left(  \Phi_{\ast}A\right)  \cup\left(  \Phi_{\ast}B\right)
\subseteq\Phi_{\ast}\left(  A\cup B\right)  $.

(7) We know that $A\cap B\subseteq A$ and $A\cap B\subseteq B$. From statement
(5) we have $\Phi^{\ast}\left(  A\cap B\right)  \subseteq\left(  \Phi^{\ast
}A\right)  $ and $\Phi^{\ast}\left(  A\cap B\right)  \subseteq\left(
\Phi^{\ast}B\right)  $.

Thus $\Phi^{\ast}\left(  A\cap B\right)  \subseteq\left(  \Phi^{\ast}A\right)
\cap\left(  \Phi^{\ast}B\right)  $.
\end{proof}

\bigskip

\section{\bigskip Approximately Groups in Descriptive Proximal Relator Spaces}

\begin{definition}
Let $\left(  X,\mathcal{R}_{\delta_{\Phi}}\right)  $ be descriptive proximal
relator space and let \textquotedblleft$\cdot$\textquotedblright\ a binary
operation defined on $X$. A subset $G$ of the set of $\ X$\ is called a
\textit{descriptive approximately} groupoid in descriptive proximal relator space if
\ $x\cdot y\in\Phi{}^{\ast}G$, for all $x,y\in G$.
\end{definition}

\begin{definition}
Let $\left(  X,\mathcal{R}_{\delta_{\Phi}}\right)  $ be descriptive proximal
relator space and let "$\cdot$" a binary operation defined on $X$. A subset
$G$ of the set of $\ X$\ is called a \textit{descriptive approximately group} in
descriptive proximal relator space if the following properties are satisfied:

\begin{enumerate}
\item[$(\mathcal{A}G_{1})$] For all $x,y\in G$, $x\cdot y\in\Phi{}^{\ast}G$,

\item[$(\mathcal{A}G_{2})$] For all $x,y,z\in G$, $\left(  x\cdot y\right)  \cdot
z=x\cdot\left(  y\cdot z\right)  $ property\ holds in $\Phi{}^{\ast}G$,

\item[$(\mathcal{A}G_{3})$] There exists $e\in\Phi{}^{\ast}G$ such that $x\cdot e=e\cdot
x=x$ for all $x\in G$\emph{ }($e$ is called the \textit{approximately identity element}
 of $G$),

\item[$(\mathcal{A}G_{4})$] There exists $y\in G$ such that $x\cdot y=y\cdot x=e$ for
all $x\in G$ ($y$ is called the \textit{inverse} of $x$ in
$G$ and denoted as $x^{-1}$).
\end{enumerate}
\end{definition}

A subset $S$ of the set of$\ X$\ is called a \textit{descriptive approximately
semigroup} in descriptive proximal relator space if \\ 
$\left(  \mathcal{A}S_{1}\right)  $
$x\cdot y\in\Phi{}^{\ast}S$, for all $x,y\in S$ and\\ 
$\left(  \mathcal{A}S_{2}\right)  $
$\left(  x\cdot y\right)  \cdot z=x\cdot\left(  y\cdot z\right)  $
property\ holds in $\Phi{}^{\ast}S$, for all $x,y,z\in S$\\ properties are satisfied.

If descriptive approximately semigroup have an approximately
identity element $e\in\Phi{}^{\ast}S$ such that $x\cdot e=e\cdot x=x$ for all
$x\in S$, then $S$ is called a \textit{descriptive approximately monoid} in
descriptive proximal relator space.

If $x\cdot y=y\cdot x$, for all $x,y\in S$ property holds in $\Phi{}^{\ast}G$,
then $G$ is \textit{commutative descriptive approximately groupoid,
semigroup, group or monoid} in descriptive proximal relator space.
\begin{example}
Let $X$ be a digital image endowed with descriptive proximity relation
$\delta_{\Phi}$ and consists of $25$ pixels as in Fig. \ref{fig:pixels1}.

\begin{figure}[!ht]
	\centering
		\includegraphics[scale=0.3]{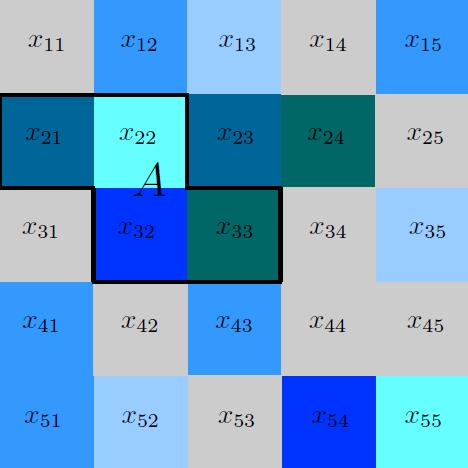}
	\caption{Digital image}
	\label{fig:pixels1}
\end{figure}

A pixel $x_{ij}$ is an element at position $\left(  i,j\right)  $ (row and
column) in digital image $X$. Let $\phi$ be a probe function that represent
RGB\ colour of each pixel are given in Table 1.

\footnotesize
{
\[%
\begin{tabular}
[c]{l|ccc}
& $Red$ & $Green$ & $Blue$\\\hline
$x_{11}$ & $204$ & $204$ & $204$\\
$x_{12}$ & $51$ & $153$ & $255$\\
$x_{13}$ & $204$ & $255$ & $255$\\
$x_{14}$ & $204$ & $204$ & $204$\\
$x_{15}$ & $51$ & $153$ & $255$\\
$x_{21}$ & $0$ & $102$ & $153$\\
$x_{22}$ & $102$ & $255$ & $255$\\
$x_{23}$ & $0$ & $102$ & $153$\\
$x_{24}$ & $0$ & $102$ & $102$\\
$x_{25}$ & $204$ & $204$ & $204$\\
$x_{31}$ & $204$ & $204$ & $204$\\
$x_{32}$ & $0$ & $51$ & $255$\\
$x_{33}$ & $0$ & $102$ & $102$%
\end{tabular}
\ \text{\ \ \ }%
\begin{tabular}
[c]{lccc}
& \multicolumn{1}{|c}{$Red$} & $Green$ & $Blue$\\\hline
$x_{34}$ & \multicolumn{1}{|c}{$204$} & $204$ & $204$\\
$x_{35}$ & \multicolumn{1}{|c}{$204$} & $255$ & $255$\\
$x_{41}$ & \multicolumn{1}{|c}{$51$} & $153$ & $255$\\
$x_{42}$ & \multicolumn{1}{|c}{$204$} & $204$ & $204$\\
$x_{43}$ & \multicolumn{1}{|c}{$51$} & $153$ & $255$\\
$x_{44}$ & \multicolumn{1}{|c}{$204$} & $204$ & $204$\\
$x_{45}$ & \multicolumn{1}{|c}{$204$} & $204$ & $204$\\
$x_{51}$ & \multicolumn{1}{|c}{$51$} & $153$ & $255$\\
$x_{52}$ & \multicolumn{1}{|c}{$204$} & $255$ & $255$\\
$x_{53}$ & \multicolumn{1}{|c}{$204$} & $204$ & $204$\\
$x_{54}$ & \multicolumn{1}{|c}{$0$} & $51$ & $255$\\
$x_{55}$ & \multicolumn{1}{|c}{$102$} & $255$ & $255$\\
&  &  &
\end{tabular}
\]
}
\normalsize
\begin{center}
Table 1.
\end{center}
Let
\[%
\begin{tabular}
[c]{ll}%
$\cdot:X\times X$ & $\longrightarrow X$\\
\multicolumn{1}{r}{$\left(  x_{ij},x_{kl}\right)  $} & $\longmapsto
x_{ij}\cdot x_{kl}=x_{pr},$ $p=\min\left\{  i,k\right\}  $ and $r=\min\left\{
j,l\right\}  $%
\end{tabular}
\]
be a binary operation on $X$ and $A=\left\{  x_{21},x_{22},x_{32}%
,x_{33}\right\}  $ a subimage (subset) of $X$. 

We can compute the descriptively upper approximation of $A$ by using the
Definition \ref{Df4.1}. $\Phi^{\ast}A=\{x_{ij}\in X \mid x_{ij} \delta_{\phi} A  \}$, where $\mathcal{Q}(A)=\{\phi(x_{ij})\mid
x_{ij}\in A\}$. Then $\phi\left(  x_{ij}\right)
\cap \mathcal{Q}\left(  A\right)\neq \emptyset$\ such that $x_{ij}\in X$. From Table 1, we obtain%

\begin{tabular}
[c]{ll}%
$\mathcal{Q}(A)$ & $=\left\{  \phi\left(  x_{21}\right)  ,\phi\left(
x_{22}\right)  ,\phi\left(  x_{32}\right)  ,\phi\left(  x_{33}\right)
\right\}  $\\
& $=\left\{  \left(  0,102,153\right)  ,\left(  102,255,255\right)  ,\left(
0,51,255\right)  ,\left(  0,102,102\right)  \right\}  $.
\end{tabular}

Hence we get $\Phi^{\ast}A=\left\{  x_{21},x_{22},x_{23},x_{24},x_{32}%
,x_{33},x_{54},x_{55}\right\}$ as shown in Fig. \ref{fig:pixels2u}.

\begin{figure}[!ht]
	\centering
		\includegraphics[scale=0.3]{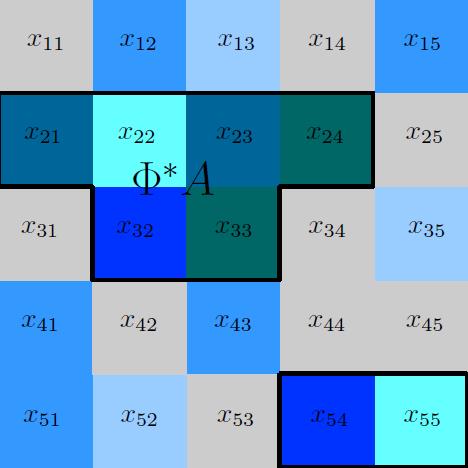}
	\caption{$\Phi^{\ast}A$ }
	\label{fig:pixels2u}
\end{figure}

Since

\begin{enumerate}
\item[$(\mathcal{A}G_{1})$] For all $x_{ij},x_{kl}\in A$, $x_{ij}\cdot x_{kl}\in\Phi
{}^{\ast}A$,

\item[$(\mathcal{A}G_{2})$] For all $x_{ij},x_{kl},x_{mn}\in A$, $\left(  x_{ij}\cdot
x_{kl}\right)  \cdot x_{mn}=x_{ij}\cdot\left(  x_{kl}\cdot x_{mn}\right)  $
property\ holds in $\Phi{}^{\ast}A$,

\item[$(\mathcal{A}G_{3})$] For all $x_{ij}\in A$, $x_{ij}\cdot x_{55}=x_{55}\cdot
x_{ij}=x_{ij}$. Hence\emph{ }$x_{55}\in\Phi{}^{\ast}A$ is a
\textit{approximately identity element} of $A$,
\end{enumerate}

are satisfied, the subimage $A$ of the image $X$\ is indeed a descriptive approximately monoid in descriptive proximity space $\left(  X,\delta_{\Phi} \right)  $ with binary
operation \textquotedblleft\ $\cdot$ \textquotedblright. Also, since
$x_{ij}\cdot x_{kl}=x_{kl}\cdot x_{ij}$, for all $x_{ij},x_{kl}\in A$ property
holds in $\Phi{}^{\ast}A$, $A$ is a commutative descriptive approximately monoid.
\end{example}

\begin{example}
Let $X$ be a digital image endowed with descriptive proximity relation
$\delta_{\Phi}$ and consists of $36$ pixels as in Fig.\ref{fig:pixels3}.

\begin{figure}[!ht]
	\centering
		\includegraphics[scale=0.3]{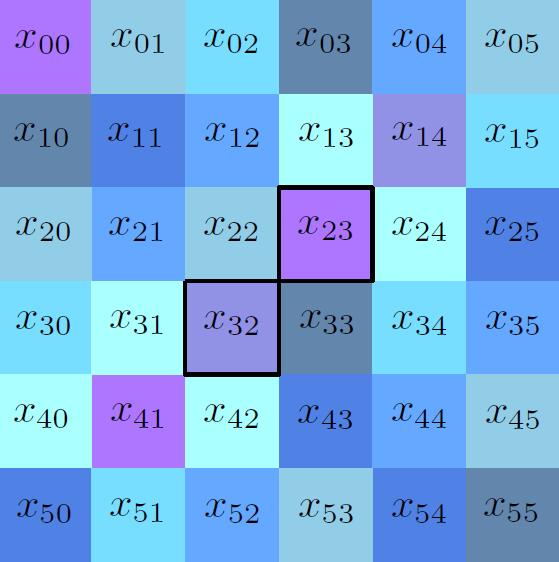}
	\caption{Digital image}
	\label{fig:pixels3}
\end{figure}

A pixel $x_{ij}$ is an element at position $\left(  i,j\right)  $ (row and
column) in digital image $X$. Let $\phi$ be a probe function that represent
RGB\ colour of each pixel are given in Table 2.
\footnotesize{
\[%
\begin{tabular}
[c]{l|ccc}
& $Red$ & $Green$ & $Blue$\\ \hline
$x_{00}$ & $174$ & $117$ & $255$\\
$x_{01}$ & $145$ & $205$ & $230$\\
$x_{02}$ & $120$ & $222$ & $255$\\
$x_{03}$ & $98$ & $134$ & $172$\\
$x_{04}$ & $100$ & $168$ & $255$\\
$x_{05}$ & $145$ & $205$ & $230$\\
$x_{10}$ & $98$ & $134$ & $172$\\
$x_{11}$ & $80$ & $130$ & $230$\\
$x_{12}$ & $100$ & $168$ & $255$\\
$x_{13}$ & $170$ & $255$ & $255$\\
$x_{14}$ & $145$ & $145$ & $230$\\
$x_{15}$ & $120$ & $222$ & $255$%
\end{tabular}
\
\begin{tabular}
[c]{l|ccc}
& $Red$ & $Green$ & $Blue$\\ \hline
$x_{20}$ & $145$ & $205$ & $230$\\
$x_{21}$ & $100$ & $168$ & $255$\\
$x_{22}$ & $145$ & $205$ & $230$\\
$x_{23}$ & $174$ & $117$ & $255$\\
$x_{24}$ & $170$ & $255$ & $255$\\
$x_{25}$ & $80$ & $130$ & $230$\\
$x_{30}$ & $120$ & $222$ & $255$\\
$x_{31}$ & $170$ & $255$ & $255$\\
$x_{32}$ & $145$ & $145$ & $230$\\
$x_{33}$ & $98$ & $134$ & $172$\\
$x_{34}$ & $120$ & $222$ & $255$\\
$x_{35}$ & $100$ & $168$ & $255$%
\end{tabular}
\text{\ }%
\begin{tabular}
[c]{l|ccc}
& $Red$ & $Green$ & $Blue$\\ \hline
$x_{40}$ & $170$ & $255$ & $255$\\
$x_{41}$ & $174$ & $117$ & $255$\\
$x_{42}$ & $170$ & $255$ & $255$\\
$x_{43}$ & $80$ & $130$ & $230$\\
$x_{44}$ & $100$ & $168$ & $255$\\
$x_{45}$ & $145$ & $205$ & $230$\\
$x_{50}$ & $80$ & $130$ & $230$\\
$x_{51}$ & $120$ & $222$ & $255$\\
$x_{52}$ & $100$ & $168$ & $255$\\
$x_{53}$ & $145$ & $205$ & $230$\\
$x_{54}$ & $80$ & $130$ & $230$\\
$x_{55}$ & $98$ & $134$ & $172$%
\end{tabular}
\
\]
}
\normalsize
\begin{center}
Table 2.
\end{center}
Let
\[%
\begin{tabular}
[c]{ll}%
$\odot:X\times X$ & $\longrightarrow X$\\
\multicolumn{1}{r}{$\left(  x_{ij},x_{kl}\right)  $} & $\longmapsto
x_{ij}\odot x_{kl}=x_{pr},$ $i+k=p$ $\operatorname{mod}\left(  5\right)  $ and
$j+l=r$ $\operatorname{mod}\left(  5\right)  $%
\end{tabular}
\
\]

be a binary operation on $X$ and $B=\left \{  x_{23},x_{32}\right \}  $ a
subimage (subset) of $X$.

We can compute the descriptively upper approximation of $B$ by using the
Definition \ref{Df4.1}. $\Phi^{\ast}B=\{x_{ij}\in X\mid x_{ij} \delta_{\phi} B  \}$, where $\mathcal{Q}(B)=\{ \phi(x_{ij})\mid
x_{ij}\in B\}$. Then  $\phi\left(  x_{ij}\right)
\cap \mathcal{Q}\left(  B\right)\neq \emptyset$\ such that $x_{ij}\in X$. From Table 2, we obtain%

\begin{tabular}
[c]{ll}%
$\mathcal{Q}(B)$ & $=\left \{  \phi \left(  x_{23}\right)  ,\phi \left(
x_{32}\right)  \right \}  $\\
& $=\left \{  \left(  174,117,255\right)  ,\left(  145,145,230\right)
\right \}  $.
\end{tabular}

Hence we get $\Phi^{\ast}B=\left \{  x_{23},x_{41},x_{00},x_{32},x_{14}%
\right \}  $.

Since

\begin{enumerate}
\item[$(\mathcal{A}G_{1})$] For all $x_{ij},x_{kl}\in A$, $x_{ij}\odot x_{kl}\in \Phi
{}^{\ast}B$,

\item[$(\mathcal{A}G_{2})$] For all $x_{ij},x_{kl},x_{mn}\in A$, $\left(  x_{ij}\odot
x_{kl}\right)  \odot x_{mn}=x_{ij}\odot \left(  x_{kl}\odot x_{mn}\right)  $
property\ holds in $\Phi{}^{\ast}B$,

\item[$(\mathcal{A}G_{3})$] For all $x_{ij}\in A$, $x_{ij}\odot x_{00}=x_{00}\odot
x_{ij}=x_{ij}$. Hence\emph{ }$x_{00}\in \Phi{}^{\ast}B$ is a
\textit{approximately identity element} of $B$,

\item[$(\mathcal{A}G_{4})$] Since $x_{32}\odot x_{23}=x_{23}\odot x_{32}=x_{00}$,
$x_{32}^{-1}=$ $x_{23}$ and  $x_{23}^{-1}=$ $x_{32}$, i.e., $x_{32},x_{23}$
are \textit{inverses} of $x_{23},x_{32}$, respectively.
\end{enumerate}

are satisfied, the subimage $B$ of the image $X$\ is indeed a descriptive approximately group in descriptive proximity space $\left(  X,\delta_{\Phi}\right)  $
with binary operation \textquotedblleft \ $\odot$ \textquotedblright. Also,
since $x_{ij}\odot x_{kl}=x_{kl}\odot x_{ij}$, for all $x_{ij},x_{kl}\in B$
property holds in $\Phi{}^{\ast}B$, $B$ is a commutative descriptive approximately group.
\end{example}

\begin{proposition}
Let $\left(  X,\mathcal{R}_{\delta_{\Phi}}\right)  $ be descriptive proximal
relator space and $G\subseteq X$ a descriptive approximately group.

(1) There is one and only one approximately identity element in $G$.

(2) $\forall x\in G$, there is only one $y\in G$ such that $x\cdot y=y\cdot
x=e$; we denote it by $x^{-1}$.

(3) $\left(  x^{-1}\right)  ^{-1}=x$, for all $x\in G$.

(4) $\left(  x\cdot y\right)  ^{-1}=y^{-1}\cdot x^{-1}$, for all $x,y\in G$.
\end{proposition}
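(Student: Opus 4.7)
The plan is to imitate the classical group-theoretic arguments while tracking where each element lives: elements of $G$ have inverses in $G$ by $(\mathcal{A}G_{4})$, but the identity $e$ and every product $x \cdot y$ lie only in $\Phi^{\ast}G$, and associativity is only asserted by $(\mathcal{A}G_{2})$ for triples drawn from $G$. Every associative regrouping in the proof must therefore be justified against this restriction.

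For part (1), I cannot simply write $e = e \cdot e' = e'$ because neither $e$ nor $e'$ is guaranteed to lie in $G$, so the identity action cannot be applied to them. Instead I would go through inverses. Fix any $x \in G$ and, using $(\mathcal{A}G_{4})$, pick $y_{1} \in G$ with $x \cdot y_{1} = y_{1} \cdot x = e$ and $y_{2} \in G$ with $x \cdot y_{2} = y_{2} \cdot x = e'$. Evaluating the expression $y_{1} \cdot x \cdot y_{2}$ in the two legal associative groupings (permitted since $y_{1},x,y_{2}\in G$) gives $e \cdot y_{2} = y_{2}$ on one side and $y_{1} \cdot e' = y_{1}$ on the other; hence $y_{1} = y_{2}$, and multiplying by $x$ forces $e = e'$.

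Part (2) is the familiar cancellation trick, now made rigorous by part (1): if $y_{1},y_{2}\in G$ are both inverses of $x$, then $y_{1} = y_{1}\cdot e = y_{1}\cdot(x\cdot y_{2}) = (y_{1}\cdot x)\cdot y_{2} = e\cdot y_{2} = y_{2}$, with the middle associative step again allowed because $y_{1},x,y_{2}\in G$. Part (3) is then immediate: the identities $x^{-1}\cdot x = x\cdot x^{-1}=e$ exhibit $x\in G$ as an inverse of $x^{-1}\in G$, and uniqueness from (2) applied to $x^{-1}$ yields $(x^{-1})^{-1}=x$.

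Part (4) is where I expect the main obstacle. The element $x\cdot y$ sits in $\Phi^{\ast}G$ but not necessarily in $G$, so the axioms do not literally supply an inverse of $x\cdot y$; the statement must be read as asserting that $y^{-1}\cdot x^{-1}$ plays the inverse role. I would verify this by direct computation,
\[
(x\cdot y)\cdot(y^{-1}\cdot x^{-1}) = x\cdot\bigl((y\cdot y^{-1})\cdot x^{-1}\bigr) = x\cdot(e\cdot x^{-1}) = x\cdot x^{-1} = e,
\]
and symmetrically for $(y^{-1}\cdot x^{-1})\cdot(x\cdot y)=e$. The delicate point is that these regroupings involve triples in which at least one entry is a product living in $\Phi^{\ast}G$ rather than in $G$, so $(\mathcal{A}G_{2})$ does not strictly cover them; I would either read $(\mathcal{A}G_{2})$ as holding throughout $\Phi^{\ast}G$ (the natural extension the author seems to intend) or rebracket so that each individual associative step only involves $G$-elements together with $e$, whose identity action is axiomatic. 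Once both products equal $e$, the argument of (2) identifies $y^{-1}\cdot x^{-1}$ as $(x\cdot y)^{-1}$.
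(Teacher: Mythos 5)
The paper states this proposition and immediately moves on --- no proof is given --- so there is nothing of the author's to compare your argument against; what follows assesses your attempt on its own terms. Your parts (2) and (3) are correct and carefully tracked: every associative regrouping there involves only a triple drawn from $G$, and $e$ only ever acts on elements of $G$, which is exactly what $(\mathcal{A}G_{2})$ and $(\mathcal{A}G_{3})$ license. You are also right that the classical one-liner $e=e\cdot e'=e'$ is unavailable here, since neither identity need lie in $G$.

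Part (1), however, has a genuine gap: the element $y_{2}\in G$ with $x\cdot y_{2}=y_{2}\cdot x=e'$ does not exist by any axiom. $(\mathcal{A}G_{4})$ supplies inverses only relative to the identity $e$ produced by $(\mathcal{A}G_{3})$, not relative to a competing identity $e'$, and this cannot be repaired from the stated axioms: take $X=\{a,e,e'\}$ with all three points given the same description (so that $\Phi^{\ast}\{a\}=X$), let $G=\{a\}$, and define $a\cdot a=e$ and $a\cdot e=e\cdot a=a\cdot e'=e'\cdot a=a$; then $G$ satisfies $(\mathcal{A}G_{1})$--$(\mathcal{A}G_{4})$ while $e\neq e'$ are both approximately identity elements. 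So (1) requires an additional hypothesis (for instance that $e\in G$, or that inverses exist relative to every approximately identity), and your proof silently assumes the latter. In part (4) you correctly diagnose the obstruction --- the regrouping $(x\cdot y)\cdot(y^{-1}\cdot x^{-1})=((x\cdot y)\cdot y^{-1})\cdot x^{-1}$ applies associativity to a triple containing $x\cdot y\in\Phi^{\ast}G$, which $(\mathcal{A}G_{2})$ does not cover --- but your proposed fallback of rebracketing so that each step uses only $G$-elements together with $e$ cannot be carried out: bracketings of a four-fold product are connected only by single associativity moves, and every chain from $(x\cdot y)\cdot(y^{-1}\cdot x^{-1})$ to $x\cdot\left((y\cdot y^{-1})\cdot x^{-1}\right)$ must pass through a move whose triple contains a product lying outside $G$. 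The only viable route is the first one you name: read $(\mathcal{A}G_{2})$ as associativity on all of $\Phi^{\ast}G$, in the spirit of the groupoid hypotheses the paper later imposes on $\Phi^{\ast}H$ in Theorem \ref{Th4.1}. State that strengthened hypothesis explicitly and parts (1) and (4) go through; without it they do not.
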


\begin{definition}
Let $G$ be a descriptive approximately group and $H$ a non-empty subset of
$G$. $H$\ is called a descriptive approximately subgroup of $G$ if $H$ is a
descriptive approximately group relative to the operation in $G$.
\end{definition}

There is only one guaranteed trivial descriptive approximately subgroup of
$G$, i.e., $G$ itself. Moreover, $\left\{  e\right\}  $ is a trivial
descriptive approximately subgroup of $G$ if and only if $e\in G$.

\begin{theorem}
\label{Th4.1}Let $G$ be a descriptive approximately group, $H$ a non-empty
subset of $G$ and $\Phi^{\ast}H$ a groupoid. $H$ is a descriptive
approximately subgroup of $G$ if and only if $x^{-1}\in H$ for all $x\in H$.
\end{theorem}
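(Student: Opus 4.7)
The plan is to prove both implications of the biconditional separately. For the forward direction, I would simply note that if $H$ is a descriptive approximately subgroup of $G$, then $H$ satisfies axiom $(\mathcal{A}G_4)$ by definition, so every $x \in H$ admits an inverse $x^{-1}\in H$. This direction requires no use of the hypothesis that $\Phi^{\ast}H$ is a groupoid.

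For the converse, the plan is to verify all four axioms $(\mathcal{A}G_1)$--$(\mathcal{A}G_4)$ for $H$ under the standing assumptions that $\Phi^{\ast}H$ is a groupoid and that $x^{-1}\in H$ for every $x\in H$. The key facts I would draw from the preceding theorem are: (a) $H\subseteq\Phi^{\ast}H$, from statement (1); and (b) $\Phi^{\ast}H\subseteq\Phi^{\ast}G$, from statement (5), since $H\subseteq G$. Axiom $(\mathcal{A}G_1)$ is immediate: for $x,y\in H\subseteq \Phi^{\ast}H$, the closure of the operation on $\Phi^{\ast}H$ (being a groupoid) gives $x\cdot y\in\Phi^{\ast}H$. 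Axiom $(\mathcal{A}G_2)$ follows because associativity holds in $\Phi^{\ast}G$ for elements of $G$, and the specific products $(x\cdot y)\cdot z$ and $x\cdot(y\cdot z)$, which both lie in $\Phi^{\ast}H\subseteq\Phi^{\ast}G$ by repeated closure, must therefore be equal. Axiom $(\mathcal{A}G_4)$ is exactly the standing hypothesis, with the inverse in $G$ witnessed now to lie inside $H$.

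The step that requires the most care is $(\mathcal{A}G_3)$: I need an approximately identity $e$ that lies in $\Phi^{\ast}H$, not merely in $\Phi^{\ast}G$. The idea here is to pick any $x\in H$ (using that $H$ is non-empty), take $x^{-1}\in H$ by hypothesis, and observe that $e = x\cdot x^{-1}$ is the identity supplied by $G$ being a descriptive approximately group. Since both $x$ and $x^{-1}$ lie in $H\subseteq\Phi^{\ast}H$, the groupoid assumption on $\Phi^{\ast}H$ forces $e=x\cdot x^{-1}\in\Phi^{\ast}H$. The identity property $x\cdot e = e\cdot x = x$ for $x\in H$ is then inherited from the same property in $G$.

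The main obstacle I anticipate is precisely this $(\mathcal{A}G_3)$ step, because the definition of a descriptive approximately group places the identity in the upper approximation rather than in the set itself, so one cannot just recycle the identity of $G$ without an argument that relocates it into $\Phi^{\ast}H$. This is exactly why the hypothesis \emph{$\Phi^{\ast}H$ is a groupoid} is essential and cannot be dispensed with: it is what promotes the product $x\cdot x^{-1}$ from $\Phi^{\ast}G$ into $\Phi^{\ast}H$. Once this observation is in hand, the rest of the verification is routine bookkeeping.
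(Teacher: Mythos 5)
Your proposal is correct and follows essentially the same route as the paper's own proof: the forward direction is immediate from the definition, and the converse hinges on exactly the observation the paper makes, namely that the groupoid hypothesis on $\Phi^{\ast}H$ forces $e = x\cdot x^{-1}\in\Phi^{\ast}H$. Your write-up is in fact slightly more careful than the paper's, since you derive associativity in $\Phi^{\ast}H$ from its holding in $\Phi^{\ast}G$ rather than attributing it to the groupoid assumption alone, but this is a refinement of detail, not a different argument.
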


\begin{proof}
Suppose that $H$ is a descriptive approximately subgroup of $G$. Then $H$ is a
descriptive approximately group and so $x^{-1}\in H$ for all $x\in H$.
Conversely, suppose $x^{-1}\in H$ for all $x\in H$. From the hypothesis, since
$\Phi^{\ast}H$ is a groupoid and $H\subseteq G$, then closure and associative
properties hold in $\Phi^{\ast}H$. Also we get $x\cdot x^{-1}=e\in\Phi^{\ast
}H$. Therefore $H$ is a descriptive approximately subgroup of $G$.
\end{proof}

\begin{theorem}
\label{Th4.2}Let $G$ be a descriptive approximately group and $H$ a non-empty
subset of $G$. $H$ is a descriptive approximately subgroup of $G$ if and only
if $Q\left(  H\right)  =Q\left(  G\right)  $.
\end{theorem}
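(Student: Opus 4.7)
The plan is to prove the two implications separately, exploiting the identification
$$\Phi^{\ast}A = \{x \in X \mid \Phi(x) \in \mathcal{Q}(A)\},$$
which unwinds the definition of $\delta_{\Phi}$ in Definition \ref{Df4.1} and yields the useful consequence $\mathcal{Q}(\Phi^{\ast}A) = \mathcal{Q}(A)$. Much of the proof will hinge on translating statements about the two sets $H$ and $G$ into statements about $\mathcal{Q}(H)$ and $\mathcal{Q}(G)$, and then into statements about $\Phi^{\ast}H$ and $\Phi^{\ast}G$.

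For the $(\Leftarrow)$ direction, I would start from $\mathcal{Q}(H) = \mathcal{Q}(G)$ and immediately conclude $\Phi^{\ast}H = \Phi^{\ast}G$. Each axiom for $H$ to be a descriptive approximately group then reduces to the corresponding axiom already granted for $G$ on the common ambient set: closure $(\mathcal{A}G_{1})$ and the associativity assertion $(\mathcal{A}G_{2})$ are inherited because $H \subseteq G$ and $\Phi^{\ast}H = \Phi^{\ast}G$; the identity of $G$ from $(\mathcal{A}G_{3})$ already lives in $\Phi^{\ast}G = \Phi^{\ast}H$ and still acts as identity on $H \subseteq G$; and for $(\mathcal{A}G_{4})$ one selects, for each $x \in H$, its inverse $y \in G$ and then invokes the groupoid hypothesis on $\Phi^{\ast}H$ together with Theorem \ref{Th4.1} to place a suitable inverse inside $H$.

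For the $(\Rightarrow)$ direction, the inclusion $\mathcal{Q}(H) \subseteq \mathcal{Q}(G)$ is immediate from $H \subseteq G$. The reverse inclusion is the substantive content of the theorem: given $g \in G$, one must exhibit $h \in H$ with $\Phi(h) = \Phi(g)$. The plan is to leverage the fact that, as a descriptive approximately subgroup, $H$ has its own approximately identity $e_{H} \in \Phi^{\ast}H$ and its own inverses, and then combine this with part~(1) of the preceding proposition (uniqueness of the approximately identity) to force $e_{H}$ to coincide descriptively with $e_{G}$. From there, writing $g = g \cdot e_{G}$ and applying the closure axiom $(\mathcal{A}G_{1})$ inside $\Phi^{\ast}H$ should transport the description $\Phi(g)$ into $\mathcal{Q}(H)$.

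I expect this last step to be the main obstacle. The passage from ``every element of $H$ has an inverse in $H$'' to ``every feature vector in $\mathcal{Q}(G)$ is already realised by an element of $H$'' is not a formal manipulation; it appears to require either an implicit saturation hypothesis on $\Phi^{\ast}H$ (analogous to the groupoid assumption invoked in Theorem \ref{Th4.1}) or an explicit use of the descriptive nearness collection $\xi_{\Phi}$ from Theorem \ref{Th0}. I would be prepared, if the argument stalls, to strengthen the hypothesis to one under which $\Phi^{\ast}H$ coincides with $\Phi^{\ast}G$, making the theorem into a clean reformulation of the subgroup condition in descriptive terms.
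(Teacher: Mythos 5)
Your diagnosis is essentially correct, and it is worth saying plainly: the paper offers no argument here at all --- its entire proof reads ``It is obvious from Definition \ref{Df4.1}'' --- so there is nothing for your proposal to match. More importantly, the gaps you sense are real, and the statement as written fails in both directions. In the $(\Leftarrow)$ direction, $\mathcal{Q}(H)=\mathcal{Q}(G)$ does give $\Phi^{\ast}H=\Phi^{\ast}G$ and hence $(\mathcal{A}G_{1})$--$(\mathcal{A}G_{3})$ for $H$, exactly as you argue; but $(\mathcal{A}G_{4})$ requires the inverse of each $x\in H$ to lie in $H$ itself, and nothing about descriptions forces this. Your plan to ``invoke the groupoid hypothesis on $\Phi^{\ast}H$ together with Theorem \ref{Th4.1}'' is circular: the criterion of Theorem \ref{Th4.1} is precisely ``$x^{-1}\in H$ for all $x\in H$'', which is the very thing to be established, and Theorem \ref{Th4.2} does not even assume $\Phi^{\ast}H$ is a groupoid. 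A singleton $H=\{x\}$ in a group $G$ all of whose elements share one description satisfies $\mathcal{Q}(H)=\mathcal{Q}(G)$, yet it has no inverse for $x$ inside $H$ unless $x$ happens to be self-inverse.

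In the $(\Rightarrow)$ direction, only $\mathcal{Q}(H)\subseteq\mathcal{Q}(G)$ follows from $H\subseteq G$, and the reverse inclusion is false: the paper itself notes, just before Theorem \ref{Th4.1}, that $\{e\}$ is a trivial descriptive approximately subgroup of $G$ whenever $e\in G$, yet $\mathcal{Q}(\{e\})=\{\Phi(e)\}$ is a proper subset of $\mathcal{Q}(G)$ as soon as $G$ contains two elements with distinct descriptions. So the ``main obstacle'' you identify in transporting $\Phi(g)$ into $\mathcal{Q}(H)$ is not a missing trick but an actual counterexample, and your fallback --- strengthening the hypotheses until $\Phi^{\ast}H=\Phi^{\ast}G$ is forced --- repairs only one implication while changing the theorem. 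The honest conclusion is that the equivalence cannot be proved because it is not true, not that a cleverer argument is needed.
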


\begin{proof}
It is obvious from Definition \ref{Df4.1}.
\end{proof}

\begin{theorem}
\label{Th4.3}Let $H_{1}$ and $H_{2}$ be two descriptive approximately
subgroups of descriptive approximately group $G$ and $\Phi^{\ast}H_{1}$,
$\Phi^{\ast}H_{2}$ groupoids. If%
\[
\left(  \Phi^{\ast}H_{1}\right)  \cap\left(  \Phi^{\ast}H_{2}\right)
=\Phi^{\ast}\left(  H_{1}\cap H_{2}\right)  \text{,}%
\]
then $H_{1}\cap H_{2}$ is a descriptive approximately subgroup of $G$.
\end{theorem}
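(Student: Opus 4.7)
The plan is to reduce the claim to Theorem \ref{Th4.1}, whose criterion for being a descriptive approximately subgroup requires two things: that $\Phi^{\ast}(H_{1}\cap H_{2})$ be a groupoid (i.e.\ the operation closes into the upper approximation), and that $H_{1}\cap H_{2}$ be closed under taking inverses. So after verifying that $H_{1}\cap H_{2}$ is nonempty (it contains the common identity $e$ supplied by Theorem \ref{Th4.2}), I would dispatch these two points separately.

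The inverse‐closure step is essentially free. Pick an arbitrary $x\in H_{1}\cap H_{2}$. Because $H_{1}$ is a descriptive approximately subgroup of $G$, another application of Theorem \ref{Th4.1} gives $x^{-1}\in H_{1}$, and the same reasoning with $H_{2}$ yields $x^{-1}\in H_{2}$. Hence $x^{-1}\in H_{1}\cap H_{2}$, which is the condition demanded by Theorem \ref{Th4.1}.

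The substantive step is verifying that $\Phi^{\ast}(H_{1}\cap H_{2})$ is a groupoid, and this is where the standing hypothesis $(\Phi^{\ast}H_{1})\cap(\Phi^{\ast}H_{2})=\Phi^{\ast}(H_{1}\cap H_{2})$ enters. Given $x,y\in H_{1}\cap H_{2}$, I would use that $\Phi^{\ast}H_{1}$ is a groupoid to conclude $x\cdot y\in\Phi^{\ast}H_{1}$, and symmetrically $x\cdot y\in\Phi^{\ast}H_{2}$; the hypothesis then identifies this common element as sitting in $\Phi^{\ast}(H_{1}\cap H_{2})$. Associativity in $\Phi^{\ast}(H_{1}\cap H_{2})$ is inherited from $G$, since the operation is the one restricted from $G$ and associativity already holds throughout $X$ in the relevant sense.

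The main obstacle, and the reason the hypothesis is needed at all, is exactly this closure step: in general Theorem~1(7) only gives the inclusion $\Phi^{\ast}(A\cap B)\subseteq(\Phi^{\ast}A)\cap(\Phi^{\ast}B)$, not equality, so without the assumed reverse inclusion one could produce a product $x\cdot y$ landing in $(\Phi^{\ast}H_{1})\cap(\Phi^{\ast}H_{2})$ but not in $\Phi^{\ast}(H_{1}\cap H_{2})$, which would break the groupoid axiom. Once this closure is in hand, Theorem \ref{Th4.1} applied to $H_{1}\cap H_{2}$ concludes the proof.
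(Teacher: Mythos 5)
Your argument is essentially the paper's own proof: use the hypothesis $\left(\Phi^{\ast}H_{1}\right)\cap\left(\Phi^{\ast}H_{2}\right)=\Phi^{\ast}\left(H_{1}\cap H_{2}\right)$ to conclude that $\Phi^{\ast}\left(H_{1}\cap H_{2}\right)$ is a groupoid, check that $H_{1}\cap H_{2}$ is closed under inverses because each $H_{i}$ is, and invoke Theorem \ref{Th4.1}. The one caveat is your nonemptiness remark: the approximately identity $e$ lies in $\Phi^{\ast}G$ but need not lie in $G$ (let alone in $H_{1}\cap H_{2}$), so that justification does not work as stated --- though the paper's own proof leaves the nonemptiness of $H_{1}\cap H_{2}$ unaddressed as well.
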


\begin{proof}
Let $H_{1}$ and $H_{2}$ be two descriptive approximately subgroups of $G$. It
is obvious that $H_{1}\cap H_{2}\subset G$. Since $\Phi^{\ast}H_{1}$,
$\Phi^{\ast}H_{2}$ are groupoids and $\left(  \Phi^{\ast}H_{1}\right)
\cap\left(  \Phi^{\ast}H_{2}\right)  =\Phi^{\ast}\left(  H_{1}\cap
H_{2}\right)  $, $\Phi^{\ast}\left(  H_{1}\cap H_{2}\right)  $ is a groupoid.
Consider $x\in H_{1}\cap H_{2}$. Since $H_{1}$ and $H_{2}$ be two descriptive
approximately subgroups, we have $x^{-1}\in H_{1}$, $x^{-1}\in H_{2}$, i.e.,
$x^{-1}\in H_{1}\cap H_{2}$. Consequently, from Theorem \ref{Th4.1} $H_{1}\cap
H_{2}$ is a descriptive approximately subgroup of $G$.
\end{proof}

\begin{corollary}
In a descriptive proximity space, every proximal groups are descriptive
approximately groups.
\end{corollary}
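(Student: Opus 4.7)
The plan is to unpack the definition of a proximal group, which is a group $G$ living inside a proximity space with the group operation and inversion proximally continuous, and show that its four group axioms upgrade verbatim to the weaker approximately-group axioms once we pass to the descriptively upper approximation $\Phi^{\ast}G$. The single observation that powers everything is statement (1) of the theorem in Section 3, namely the containment $G\subseteq\Phi^{\ast}G$, which follows directly from the fact that for every $a\in G$ we have $\Phi(a)\in\mathcal{Q}(G)$ and hence $a\ \delta_{\Phi}\ G$.

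First I would verify $(\mathcal{A}G_{1})$: since $G$ is an ordinary group, $x\cdot y\in G$ for all $x,y\in G$, and by $G\subseteq\Phi^{\ast}G$ we immediately get $x\cdot y\in\Phi^{\ast}G$. Next, for $(\mathcal{A}G_{2})$, the equality $(x\cdot y)\cdot z=x\cdot(y\cdot z)$ already holds in $G$ for all $x,y,z\in G$; all five points involved lie in $G\subseteq\Phi^{\ast}G$, so the associative identity is inherited in $\Phi^{\ast}G$ without further work. For $(\mathcal{A}G_{3})$, the group identity $e\in G\subseteq\Phi^{\ast}G$ serves as the approximately identity element, because $x\cdot e=e\cdot x=x$ holds for all $x\in G$. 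Finally, $(\mathcal{A}G_{4})$ is immediate: for each $x\in G$ the group inverse $y=x^{-1}\in G$ satisfies $x\cdot y=y\cdot x=e$.

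The argument is essentially a one-line invocation of $G\subseteq\Phi^{\ast}G$ applied four times, so there is no genuine obstacle. The only point that deserves care is to make explicit which operations and equalities are required to live in $\Phi^{\ast}G$ (only closure, associativity, and the identity equation) versus in $G$ itself (the existence of inverses), since the approximately-group axioms are asymmetric in this regard. Once these are matched against the usual group axioms, the corollary follows.
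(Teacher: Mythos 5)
Your proof is correct and matches the evident intent of the paper, which states this corollary without any proof: the entire content is the containment $G\subseteq\Phi^{\ast}G$ from part (1) of the theorem in Section 3, under which each ordinary group axiom for $G$ immediately yields the corresponding approximately-group axiom, exactly as you argue. The only caveat is that the paper never formally defines ``proximal group,'' so your reading of it as a group sitting inside the proximity space is an interpretive assumption (and note that the proximal-continuity conditions you mention are never actually used), but it is the only reading under which the statement makes sense.
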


\bibliographystyle{elsarticle-num}
\bibliography{NSrefs}

\end{document}